\newtheorem{theorem}{Theorem}[section]
\newtheorem{lemma}[theorem]{Lemma}
\theoremstyle{definition}
\theoremstyle{remark}
\numberwithin{equation}{section}
\begin{document}

\title[short text for running head]{``2-color partitions modulo powers of $5$''}
\title{On 2-color partitions where one of the color is multiples of $5^k$ }

\author[]{Shivashankar C.$^{1}$\orcidlink{0000-0001-7503-4414}}
\address{$^{1}$Department of Mathematics, Vidyavardhaka College of Engineering, Gokulam III Stage, Mysuru-570002, Karnataka, India.}
\email{shankars224@gmail.com}
\thanks{}

\author[]{HemanthKumar B.$^{2}$\orcidlink{0000-0001-7904-293X}}

\address{$^2$Department of Mathematics, RV College of Engineering, RV Vidyanikethan Post, Mysore Road, Bengaluru-560 059, Karnataka, India.}
\email{hemanthkumarb.30@gmail.com}

\thanks{}

\author[]{D. S. Gireesh$^{3}$\orcidlink{0000-0002-2804-6479}}
\address{$^{3}$Department of Mathematics, BMS College of Engineering, P.O. Box No.: 1908, Bull Temple Road,
Bengaluru-560 019, Karnataka, India.}
\email{gireeshdap@gmail.com}
\thanks{}

\date{}
          
\begin{abstract}
In this work, we investigate the arithmetic properties of $p_{1,5^k}(n)$, which counts 2-color partitions of $n$ where one of the colors appears only in parts that are multiples of $5^k$. By constructing generating functions for $p_{1,5^k}(n)$ across specific arithmetic progressions, we establish a set of Ramanujan-type infinite family of congruences modulo powers of $5$.
\end{abstract}

\medskip
\subjclass[2020]{primary 11P83; secondary 05A15, 05A17}
\keywords{Partitions; Two color Partitions; Generating Functions; Congruences}

\maketitle

\section{Introduction}
\label{intro}
A partition of a positive integer $n$ is a non-increasing sequence of positive integers that sums is $n$. Let $p(n)$ denote the number of partitions of $n$, with the generating function given by
\[\sum\limits_{n\geq0}p(n)q^n=\frac1{f_1}.\]
Here and throughout the paper, we set
\[f_r:=(q^r;q^r)_\infty=\prod\limits_{m=1}^{\infty}(1-q^{rm}).\]

Ramanujan \cite{Ram2} conjectured, and Watson \cite{GNW} proved that
\begin{equation}\label{pc1}
    p(5^kn+\delta_k)\equiv 0\pmod{5^k},
\end{equation}
where $k\geq1$ and $\delta_k$ is the reciprocal modulo $5^k$ of $24$.

Hirschhorn and Hunt \cite{HH} proved \eqref{pc1} by establishing generating functions
\begin{equation}\label{H1}
\sum_{n\geq 0}p\left(5^{2k-1}n+\delta_{2k-1}\right)q^n=\sum_{j\geq 1}x_{2k-1,j} \, q^{j-1}\frac{f_5^{6j-1}}{f_1^{6j}}
\end{equation}
and
\begin{equation}\label{H2}
\sum_{n\geq 0}p\left(5^{2k}n+\delta_{2k}\right)q^n=\sum_{j\geq 1}x_{2k,j} \, q^{j-1}\frac{f_5^{6j}}{f_1^{6j+1}},
\end{equation}
where the coefficient vectors $\mathbf{x}_k= (x_{k,1}, x_{k,2},\dots)$ are given by
$$\mathbf{x}_1=\left(x_{1,1},x_{1,2},\dots\right)=\left(5,0,0,\dots\right),$$ and for $k\geq1$,
\[
   x_{k+1, i}=\begin{cases}
  \displaystyle \sum_{j\geq1} x_{k, j} \,m_{6j,j+i}, \,\,\text{if}\,\,k\,\,\text{is odd},\\
   \displaystyle \sum_{j\geq1}x_{k, j} \,m_{6j+1,j+i}, \,\,\text{if}\, \,k \,\,\text{is even},
    \end{cases}
\]
where the first five rows of $M=\left(m_{i,j}\right)_{i,j\geq 1}$ are
\[\begin{bmatrix}
5 & 0 & 0 & 0& 0& 0&\dots\\
2\times 5 & 5^3 & 0 &0 &0 & 0& \dots\\
9 & 3\times5^3 & 5^5 &0 &0 & 0& \dots\\
4 & 22\times5^2 & 4\times5^5 &5^7 &0 & 0& \dots\\
1 & 4\times5^3 & 8\times5^5 &5^8 &5^9 & 0& \dots
\end{bmatrix}\]
and for $i\geq6$, $m_{i,1}=0$, and for $j\geq2$,
\[m_{i,j}=25m_{i-1,j-1}+25m_{i-2,j-1}+15m_{i-3,j-1}+5m_{i-4,j-1}+m_{i-5,j-1}.\]

Let $p_{1,\ell}(n)$ be the number of 2-color partitions of $n$ where one of the colors appears only in parts that are multiples of $\ell$; its generating function is given by
\begin{equation}\label{bl}
\sum\limits_{n\geq0}p_{1,\ell}(n)q^n=\frac{1}{f_1f_{\ell}}.
\end{equation}

Ahmed, Baruah, and Dastidar \cite {ABD} found several new congruences modulo 5:
\begin{equation}\label{AB}
p_{1,\ell}\left(25n+t\right)\equiv 0 \pmod{5},
\end{equation}
where $\ell\in\{0, 1, 2, 3, 4, 5, 10, 15, 20\}$ and $\ell+t=24$. They also conjectured that the congruence also holds for $\ell= 7, 8,$ \text{and} $17$. This conjecture was confirmed by Chern \cite{SC}, moreover he proved that
\[p_{1,4}\left(49n+t\right)\equiv 0 \pmod{7},\]
where $t\in\{11, 25, 32, 39\}$.

In \cite{LW}, Wang derived congruences for $p_{1,5}(n)$ analogous to \eqref{pc1}. He proved that
\begin{align}
&p_{1,5}\left(5^{\beta+1}n+\frac{3\times5^{\beta+1}+1}{4}\right)\equiv0\pmod{5^{\beta+1}},\label{W1}\\&
p_{1,5}\left(5^{\beta+2}n+\frac{11\times5^{\beta+1}+1}{4}\right)\equiv0\pmod{5^{\beta+2}},\label{W2}\\&
p_{1,5}\left(5^{\beta+2}n+\frac{19\times5^{\beta+1}+1}{4}\right)\equiv0\pmod{5^{\beta+2}},\label{W2}
\end{align}
for each $n,\beta\geq 0$. Note that, \eqref{W1} is stronger result than \eqref{AB} for $\ell=5$.

Ranganath \cite{RD} extended these results to $p_{1,25}(n)$. For each $n,\beta \geq0$, he proved that
\begin{equation}\label{RD}
p_{1,25}\left(5^{2\beta+1}n+\frac{7\times5^{2\beta+1}+13}{12}\right)\equiv0\pmod{5^{\beta+1}}
\end{equation}
and
\begin{equation}\label{RD}
p_{1,25}\left(5^{2\beta+2}n+\frac{11\times5^{2\beta+2}+13}{12}\right)\equiv0\pmod{5^{\beta+2}}.
\end{equation}

We establish congruences for $p_{1,5^k}$ for all positive integers $k$. The congruences previously derived by Wang and Ranganath can be seen as specific instances of the results presented in this work.

The main results are as follow:
\begin{theorem}\label{th1}
For each $n,\beta \geq0$, and $k\geq1$, we have
\begin{equation}\label{c1}
    p_{1,5^{2k-1}}\left(5^{2k+\beta-1}n+\frac{18\cdot5^{2k+\beta-1}+5^{2k-1}+1}{24}\right)\equiv 0\pmod{5^{2k+\beta-1}},
\end{equation}
\begin{equation}\label{c2}
    p_{1,5^{2k}}\left(5^{2k+2\beta-1}n+\frac{14\cdot5^{2k+2\beta-1}+5^{2k}+1}{24}\right)\equiv 0\pmod{5^{2k+\beta-1}},
\end{equation}
\begin{equation}\label{c3}
    p_{1,5^{2k}}\left(5^{2k+2\beta}n+\frac{22\cdot5^{2k+2\beta}+5^{2k}+1}{24}\right)\equiv 0\pmod{5^{2k+\beta}},
\end{equation}
\begin{equation}\label{c4}
    p_{1,5^{2k-1}}\left(5^{2k+\beta}n+\frac{(24r+18)\cdot5^{2k+\beta-1}+5^{2k-1}+1}{24}\right)\equiv 0\pmod{5^{2k+\beta}},
\end{equation}
and
\begin{equation}\label{c6}
    p_{1,5^{2k}}\left(5^{2k+2\beta+2}n+\frac{22\cdot5^{2k+2\beta+2}+5^{2k}+1}{24}\right)\equiv 0\pmod{5^{2k+\beta+1}},
\end{equation}
where $r\in\{2,3,4\}$.
\end{theorem}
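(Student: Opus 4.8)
The plan is to realise each progression in the statement as an image of $1/(f_1 f_{5^k})$ under iterated $5$-section operators and to reduce the claimed divisibilities to the $5$-adic behaviour already encoded in the Hirschhorn--Hunt vectors $\mathbf{x}_k$. For $r\in\{0,1,2,3,4\}$ write $U_{5,r}$ for the operator $\sum_n a_nq^n\mapsto\sum_n a_{5n+r}q^n$. The single identity I will lean on is the factorisation
\[U_{5,r}\bigl(h(q)\,g(q^5)\bigr)=U_{5,r}(h)(q)\,g(q),\]
valid whenever the second factor is a power series in $q^5$. Since $f_{5^j}(q)=f_{5^{j-1}}(q^5)$, every application of $U_{5,r}$ to $1/(f_1 f_{5^j})$ with $j\ge1$ \emph{peels} the second factor, replacing $1/f_{5^j}$ by $1/f_{5^{j-1}}$ while acting on the $1/f_1$ factor alone.

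First I would couple this peeling with the Hirschhorn--Hunt dissection of $1/f_1$. Choosing at each stage the offset that keeps the $1/f_1$ factor on the reciprocal-of-$24$ track---the successive base-$5$ digits of $\delta_j$---the first $2k-1$ applications carry $1/(f_1 f_{5^{2k-1}})$ to $\tfrac1{f_1}\sum_m p(5^{2k-1}m+\delta_{2k-1})q^m$ by \eqref{H1}, while for $p_{1,5^{2k}}$ those same $2k-1$ steps give $\tfrac1{f_5}\sum_m p(5^{2k-1}m+\delta_{2k-1})q^m$ and a $2k$-th step gives $\tfrac1{f_1}\sum_m p(5^{2k}m+\delta_{2k})q^m$ via \eqref{H2}. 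Carrying the accumulated offset through these steps produces exactly the constants appearing in \eqref{c1}, \eqref{c2} and \eqref{c3} at $\beta=0$; since $p(5^{j}m+\delta_{j})\equiv0\pmod{5^{j}}$ by \eqref{pc1} and $1/f_1,\,1/f_5$ are integral power series, the $\beta=0$ instances follow immediately.

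For $\beta\ge1$ and for the deeper congruences the task is to iterate $U_{5,r}$ on the modified series
\[G(q)=\frac{1}{f_1}\sum_{j\ge1}x_{2k-1,j}\,q^{j-1}\frac{f_5^{6j-1}}{f_1^{6j}}=\sum_{j\ge1}x_{2k-1,j}\,q^{j-1}\frac{f_5^{6j-1}}{f_1^{6j+1}},\]
and on its even-level companion $\sum_{j}x_{2k,j}\,q^{j-1}f_5^{6j}/f_1^{6j+2}$. The structural claim I would establish is that these bases are closed under $U_{5,r}$, with transition coefficients given by the prescribed row-families $m_{6j+1,\,\cdot}$ and $m_{6j+2,\,\cdot}$ of the very matrix $M$: because $f_5$ is a series in $q^5$ its powers are inert under $U_{5,r}$ and only the $1/f_1$-dissection drives the recursion, so the coefficient vectors obey the $M$-recursion of \eqref{H1}--\eqref{H2}, merely started from $\mathbf{x}_{2k-1}$ (resp.\ $\mathbf{x}_{2k}$) and read off the shifted rows. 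Tracking valuations then shows one factor of $5$ per step in the odd ladder---yielding \eqref{c1}---and one factor of $5$ per two steps in the even ladder, the gain alternating according to whether the relevant row of $M$ has leading $5$-adic valuation $1$ or $0$; this gives \eqref{c2} and \eqref{c3}. The congruences \eqref{c4} and \eqref{c6} are the one-extra-step refinements obtained by substituting $n\mapsto 5n+r$ (which explains the offset $\tfrac{(24r+18)5^{2k+\beta-1}+5^{2k-1}+1}{24}$ of \eqref{c4}), valid precisely for the residues $r$ at which that extra step releases a further factor of $5$.

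The main obstacle is exactly this valuation analysis. One must show that the column recursion $m_{i,j}=25m_{i-1,j-1}+25m_{i-2,j-1}+15m_{i-3,j-1}+5m_{i-4,j-1}+m_{i-5,j-1}$ forces $v_5$ of the $j$-th coordinate of the iterated vector to grow fast enough that its minimum, attained at $j=1$, equals the exponent claimed in each congruence, and to isolate the residue-dependent gain separating $r\in\{2,3,4\}$ from $r\in\{0,1\}$ in \eqref{c4}. I expect the cleanest route is an induction on the number of steps carrying a lower bound of the shape $v_5(\text{coordinate }j)\ge(\text{level})+\lfloor c(j-1)\rfloor$, whose inductive step is fed by the multipliers $25,25,15,5,1$; the delicate point is calibrating $c$ and the residue bookkeeping tightly enough to recover the exact powers $5^{2k+\beta-1}$, $5^{2k+\beta}$ and their even-ladder analogues rather than weaker bounds.
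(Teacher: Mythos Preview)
Your scheme for \eqref{c1}--\eqref{c3} is essentially the paper's: peel off the factor $1/f_{5^k}$ using $U_5$, land on the Hirschhorn--Hunt generating functions \eqref{H1}--\eqref{H2} multiplied by an extra $1/f_1$ or $1/f_5$, and then iterate the huffing operator, tracking the coefficient vectors through the $M$-recursion. The valuation bound you are looking for is exactly $\pi(y_{\beta+1,j})\ge(\text{level})+\bigl\lfloor\tfrac{5(j-1)}{2}\bigr\rfloor$, inherited from Hirschhorn--Hunt's bound $\pi(m_{i,j})\ge\lfloor\tfrac{5j-i-1}{2}\rfloor$ together with $\pi(x_{2k-1,j})\ge 2k-1+\lfloor\tfrac{5(j-1)}{2}\rfloor$; this is the paper's Lemmas~4.3--4.4 and it goes through by the obvious induction.

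The gap is in your treatment of \eqref{c4} and \eqref{c6}. The rows $m_{6j+1,\cdot}$, $m_{6j,\cdot}$, $m_{6j+2,\cdot}$ of $M$ encode only the operator $H$, i.e.\ the \emph{one} residue that keeps you inside the closed family $\{q^{j-1}f_5^{6j-1}/f_1^{6j+1}\}$ (resp.\ its even-ladder analogues). For $r\in\{2,3,4\}$ in \eqref{c4} you are leaving that family, and the $M$-matrix does not tell you what happens there; so the phrase ``valid precisely for the residues $r$ at which that extra step releases a further factor of $5$'' is a hope rather than an argument. The paper handles this by a different mechanism: it first reduces \eqref{G1} modulo $5^{2k+\beta}$ so that only the $j=1$ term $y^{(2k-1)}_{\beta+1,1}\,f_5^{5}/f_1^{7}$ survives, then uses $f_1^5\equiv f_5\pmod5$ to rewrite this as $y^{(2k-1)}_{\beta+1,1}\,f_5^{4}/f_1^{2}$, and finally invokes Ramanathan's congruence $p_{-2}(5n+s)\equiv0\pmod5$ for $s\in\{2,3,4\}$ (where $\sum p_{-2}(n)q^n=1/f_1^2$). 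For \eqref{c6} the paper similarly isolates the leading term, applies $f_1^5\equiv f_5\pmod5$, and then uses the explicit Rogers--Ramanujan $5$-dissection $f_1=f_{25}\bigl(R(q^5)-q-q^2R(q^5)^{-1}\bigr)$ together with $p(5n+4)\equiv0\pmod5$. These auxiliary congruences are the missing ingredients; without them, the residue-dependent extra factor of $5$ cannot be extracted from the $M$-recursion alone.
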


\section{Preliminaries}
In this section, we state some lemmas which play a vital role in proving our results.
Let $H$ be the “huffing” operator modulo $5$, that is,
\[H\left(\sum{p_nq^n}\right)=\sum{p_{1,5n}q^{5n}}.\]
\begin{lemma}[\cite{HH}]\label{LGu}
 If $G= \frac{f_5^6}{q^4 f_1 f_{25}^5}$ and $u = \frac{f_5^6}{q^5 f_{25}^6}$, then
\begin{equation}\label{Gu}
    H \left( G^i \right) = \sum_{j=1}^{i} m_{i,j} u^{i-j}.
\end{equation}
\end{lemma}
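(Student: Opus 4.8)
The plan is to collapse the entire statement onto a single degree-five algebraic relation between $G$ and $u$, and then to propagate it using one structural feature of the huffing operator. Since $f_5$ and $f_{25}$ are power series in $q^5$, the function $u=f_5^6/(q^5f_{25}^6)$ is a Laurent series in $q^5$; hence $u$ is fixed by $H$ and, more to the point, $H$ is $u$-linear, i.e. $H(u^mX)=u^mH(X)$ for every series $X$ and every $m\ge0$. This is the only property of $H$ the argument requires.

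The identity I would prove first is the modular relation
\begin{equation}\label{fund}
G^5=25\,G^4+25\,u\,G^3+15\,u^2\,G^2+5\,u^3\,G+u^4.
\end{equation}
Writing $w:=G/u=qf_{25}/f_1$ and dividing \eqref{fund} by $u^4$, one sees it is equivalent to the eta-quotient identity
\begin{equation*}
\frac{f_5^6}{f_1^5f_{25}}=25\,w^4+25\,w^3+15\,w^2+5\,w+1,\qquad w=\frac{qf_{25}}{f_1}.
\end{equation*}
I would establish this by substituting the classical $5$-dissection of $f_1$, which writes $f_1$ as $f_{25}$ times a quadratic in the Rogers--Ramanujan continued fraction $\rho=R(q^5)\sim q$; this renders both $w$ and $u$ explicit rational functions of $\rho$, and the asserted identity then reduces to a polynomial identity in $\rho$ that can be checked directly. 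This is the main obstacle of the whole proof: the remaining steps are formal, whereas \eqref{fund} is a genuine modular equation and pinning down its coefficients $(25,25,15,5,1)$ exactly is where the real work lies.

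Granting \eqref{fund}, I multiply it by $G^{i-5}$ and apply $H$; the $u$-linearity of $H$ then yields, for every $i\ge5$,
\begin{equation}\label{Hrec}
H\!\left(G^{i}\right)=25\,H\!\left(G^{i-1}\right)+25\,u\,H\!\left(G^{i-2}\right)+15\,u^{2}\,H\!\left(G^{i-3}\right)+5\,u^{3}\,H\!\left(G^{i-4}\right)+u^{4}\,H\!\left(G^{i-5}\right).
\end{equation}
The initial data $H(G^0)=1$ together with $H(G),H(G^2),H(G^3),H(G^4)$ I would obtain from explicit $5$-dissections: for example Ramanujan's $\sum_{n\ge0}p(5n+4)q^n=5f_5^5/f_1^6$ gives $H(G)=5$, and the analogous dissections of $G^2,G^3,G^4$ reproduce exactly rows $1$--$4$ (and then, through \eqref{Hrec}, row $5$) of $M$, i.e. the formula $H(G^i)=\sum_{j=1}^{i}m_{i,j}u^{i-j}$ for $1\le i\le5$.

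It remains to induct on $i$. Inserting the ansatz $H(G^i)=\sum_{j\ge1}m_{i,j}u^{i-j}$ into \eqref{Hrec}, each term $u^{\ell}H(G^{i-1-\ell})$ on the right contributes entries from column $j-1$ of $M$, so that the coefficient of $u^{i-j}$ becomes $25m_{i-1,j-1}+25m_{i-2,j-1}+15m_{i-3,j-1}+5m_{i-4,j-1}+m_{i-5,j-1}$ for $j\ge2$; this is precisely the stated recurrence for $m_{i,j}$. Moreover every term on the right of \eqref{Hrec} has top degree $u^{i-2}$, so the coefficient of $u^{i-1}$ vanishes, forcing $m_{i,1}=0$ for $i\ge6$. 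With the base cases $1\le i\le5$ already in hand, this closes the induction and establishes \eqref{Gu}.
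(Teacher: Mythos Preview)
The paper does not supply its own proof of this lemma; it is quoted from Hirschhorn--Hunt \cite{HH} and used as a black box. Your outline is correct and is essentially the original Hirschhorn--Hunt argument: the $u$-linearity of $H$, the degree-five modular equation relating $G$ and $u$ (equivalently, the eta-quotient identity $f_5^6/(f_1^5f_{25})=1+5w+15w^2+25w^3+25w^4$ with $w=qf_{25}/f_1$), and the resulting five-term recurrence matching the defining recursion for $m_{i,j}$. Your verification $H(G)=5$ via Ramanujan's $\sum p(5n+4)q^n=5f_5^5/f_1^6$ is clean; the remaining base cases $H(G^2),H(G^3),H(G^4)$ are obtained in \cite{HH} directly from the $5$-dissection of $f_1$ in terms of $R(q^5)$, which you already invoke for the modular equation, so no genuinely new ingredient is being suppressed there.
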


\begin{lemma}\label{Hi1}
For all $i\geq1$, we have

\begin{equation}\label{H6i+1}
H \left( q^{i+1} \frac{f_{5}^{6i}}{f_1^{6i+1}} \right)= \sum_{j=1}^{5i+1} m_{6i+1,i+j} q^{5j} \frac{f_{25}^{6j-1}}{f_5^{6j}},
\end{equation}
\begin{equation}\label{H6i+2}
H \left( q^{i+2} \frac{f_{5}^{6i}}{f_1^{6i+2}} \right)= \sum_{j=1}^{5i+2} m_{6i+2	,i+j} \, q^{5j} \frac{f_{25}^{6j-2}}{f_5^{6j}},
\end{equation}
and
\begin{equation}\label{H6i}
H \left( q^{i} \frac{f_{5}^{6i-1}}{f_1^{6i}} \right)= \sum_{j=1}^{5i} m_{6i, i+j} \, q^{5j} \frac{f_{25}^{6j}}{f_5^{6j+1}}.
\end{equation}
\end{lemma}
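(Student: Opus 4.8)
The plan is to deduce all three identities from Lemma~\ref{LGu} by rewriting each eta-quotient inside $H$ as a power of $G$ times a factor that is a power series in $q^5$. The enabling observation is that $f_5$, $f_{25}$, and their reciprocals are power series in $q^5$ (each has constant term $1$), while $H$ kills every monomial $q^n$ with $5\nmid n$; hence for any $P$ that is a power series in $q^5$ one has $H(P\cdot B)=P\cdot H(B)$.

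For \eqref{H6i+1} I would first check, directly from $G=f_5^6/(q^4f_1f_{25}^5)$, that
\begin{equation*}
q^{i+1}\frac{f_5^{6i}}{f_1^{6i+1}}=q^{25i+5}\frac{f_{25}^{30i+5}}{f_5^{30i+6}}\,G^{6i+1}.
\end{equation*}
The prefactor equals $(q^5)^{5i+1}$ times a power series in $q^5$, so it pulls out of $H$. Applying Lemma~\ref{LGu} with exponent $6i+1$ gives $H\!\left(G^{6i+1}\right)=\sum_{\ell=1}^{6i+1}m_{6i+1,\ell}\,u^{6i+1-\ell}$; substituting $u=f_5^6/(q^5f_{25}^6)$ and collecting the powers of $q$, $f_5$, $f_{25}$ turns the $\ell$-th term into $m_{6i+1,\ell}\,q^{5(\ell-i)}f_{25}^{6(\ell-i)-1}/f_5^{6(\ell-i)}$, and the shift $j=\ell-i$ reproduces exactly the summand of \eqref{H6i+1}. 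The other two identities follow identically, with exponents $6i+2$ and $6i$ and prefactors $q^{25i+10}f_{25}^{30i+10}/f_5^{30i+12}$ and $q^{25i}f_{25}^{30i}/f_5^{30i+1}$, respectively.

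The one point requiring care is the lower limit of summation. After the shift $j=\ell-i$, the index runs from $1-i$, whereas the stated sums start at $j=1$; I must therefore show the terms with $j\le 0$, i.e. $\ell\le i$, vanish. Such a term has column index $i+j\le i$ against a row index that is at least $6i$, so it suffices to prove that $m_{a,b}=0$ whenever $a>5b$. I would establish this band-support property by induction on $b$ via the recurrence for $M$: it holds for $b=1$ since $m_{a,1}=0$ for $a\ge 6$, and if it holds in column $b-1$ then, for $a>5b$ and $b\ge2$, each index $a-s$ with $1\le s\le 5$ satisfies $a-s>5b-5=5(b-1)$, so every term on the right-hand side of $m_{a,b}=25m_{a-1,b-1}+25m_{a-2,b-1}+15m_{a-3,b-1}+5m_{a-4,b-1}+m_{a-5,b-1}$ vanishes.

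The main obstacle is bookkeeping rather than conceptual: reducing each left-hand side to a power of $G$ times a $q^5$-series forces one to track the exponents of $q$, $f_5$, and $f_{25}$ exactly, and a single error there corrupts the whole identity. The only structural input beyond Lemma~\ref{LGu} is the vanishing of the extraneous low-index terms, which is precisely the band-support property above.
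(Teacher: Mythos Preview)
Your proposal is correct and follows essentially the same approach as the paper: both derive the identities from Lemma~\ref{LGu} by factoring out a power series in $q^5$ (the paper does this via the repackaged form $H\bigl((qf_{25}/f_1)^i\bigr)=\sum_{j}m_{i,j}q^{5j}f_{25}^{6j}/f_5^{6j}$, which is exactly your prefactor trick), then shifting the summation index and discarding the low-index terms. Your inductive proof of the band-support property $m_{a,b}=0$ for $a>5b$ is actually more explicit than the paper, which simply invokes ``the fact that $m_{6i+1,j}=0$ for $1\le j\le i$'' without proof.
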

\begin{proof}
We can rewrite \eqref{Gu} as
\begin{equation}\label{Hi}
H \left( q^i \frac{f_{25}^i}{f_1^i} \right) = \sum_{j=1}^{i} m_{i,j} \, q^{5j} \frac{f_{25}^{6j}}{f_5^{6j}}.
\end{equation}
From \eqref{Hi} and the fact that $m_{6i+1, j}=0$ for $1\leq j < i,$ we have
\begin{align*}
H\left(\left(q\frac{f_{25}}{f_1}\right)^{6i+1}\right)&=\sum_{j=i+1}^{6i+1} m_{6i+1,j} \,q^{5j} \frac{f_{25}^{6j}}{f_{5}^{6j}}.\\
&=\sum_{j=1}^{5i+1} m_{6i+1,i+j} \,q^{5i+5j} \frac{f_{25}^{6i+6j}}{f_{5}^{6i+6j}},
\end{align*}
which yields \eqref{H6i+1}. Similarly, we can prove \eqref{H6i+2} and \eqref{H6i}.
\end{proof}

\section{Generating functions}
In this section, we establish generating functions for $p_{1,5^{\ell}}(n)$ within specific arithmetic progressions.
\begin{theorem}\label{T1}
For each $\beta \geq0$ and $k\geq1$, we have
\begin{equation}\label{G1}
    \sum_{n\geq 0}p_{1,5^{2k-1}}\left(5^{2k+\beta-1}n+\frac{18\cdot5^{2k+\beta-1}+5^{2k-1}+1}{24}\right)q^n=\sum_{i\geq 1}y^{(2k-1)}_{\beta+1,i} \, q^{i-1}\frac{f_5^{6i-1}}{f_1^{6i+1}}
\end{equation}
where the coefficient vectors are defined as follows:
\begin{equation*}
        y_{1,j}^{(2k-1)}=x_{2k-1,j}
\end{equation*}
and
\begin{equation*}
       y^{(2k-1)}_{\beta+1,j}= \sum_{i\geq1}y^{(2k-1)}_{\beta,i} m_{6i+1,j+i}
\end{equation*}
for all $\beta, j\geq 1$.
\end{theorem}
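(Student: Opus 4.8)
The plan is to prove Theorem~\ref{T1} by induction on $\beta$, using the huffing operator $H$ and Lemma~\ref{Hi1} as the engine that advances the generating function from stage $\beta$ to stage $\beta+1$. First I would establish the base case $\beta=0$. Here the assertion reduces to a generating function for $p_{1,5^{2k-1}}$ along the progression $5^{2k-1}n + (18\cdot 5^{2k-1}+5^{2k-1}+1)/24$, and the coefficients are $y_{1,j}^{(2k-1)}=x_{2k-1,j}$. The natural route is to start from the defining product $\sum p_{1,5^{2k-1}}(n)q^n = 1/(f_1 f_{5^{2k-1}})$, dissect it $5^{2k-1}$ times, and match the result against the Hirschhorn--Hunt formula~\eqref{H1} for $p(5^{2k-1}n+\delta_{2k-1})$; the shift in the argument by $(5^{2k-1}+1)/24$ versus $\delta_{2k-1}$ is exactly what converts the single-partition generating function $f_5^{6i-1}/f_1^{6i}$ into the two-color shape $f_5^{6i-1}/f_1^{6i+1}$ appearing on the right of~\eqref{G1}.

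Next, assuming~\eqref{G1} holds for a given $\beta\ge 1$ with coefficients $y_{\beta,i}^{(2k-1)}$, I would apply the operator $H$ to both sides to extract the subprogression modulo $5$ and thereby pass to $\beta+1$. On the right-hand side, $H$ acts termwise on the summands $q^{i-1} f_5^{6i-1}/f_1^{6i+1}$. To match the hypotheses of Lemma~\ref{Hi1}, I would rewrite each such term so the exponent of $q$ and the shape of the $f$-quotient line up with the left side of~\eqref{H6i+1}, namely $H\bigl(q^{i+1} f_5^{6i}/f_1^{6i+1}\bigr)$; after the appropriate relabeling of the dummy index and a clearing of the $q$-power dictated by the progression step, each image is a sum $\sum_j m_{6i+1,\,i+j}\,q^{5j} f_{25}^{6j-1}/f_5^{6j}$. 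Replacing $q^5\mapsto q$ (the standard renormalization after huffing) returns a series of exactly the form $\sum_j (\cdots)\, q^{j-1} f_5^{6j-1}/f_1^{6j+1}$, and collecting the coefficient of each fixed $j$ produces precisely the recurrence $y_{\beta+1,j}^{(2k-1)}=\sum_{i\ge 1} y_{\beta,i}^{(2k-1)}\, m_{6i+1,\,j+i}$. Simultaneously I would track the arithmetic progression: huffing sends $5^{2k+\beta-1}n+c_\beta$ to $5^{2k+\beta}n+c_{\beta+1}$, and a short computation should verify that $c_{\beta+1}=(18\cdot 5^{2k+\beta}+5^{2k-1}+1)/24$ as claimed.

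The delicate bookkeeping — and the step I expect to be the main obstacle — is the alignment of indices and $q$-powers so that Lemma~\ref{Hi1} applies cleanly. One must confirm that after dissection the only surviving terms are those captured by~\eqref{H6i+1} (so that the vanishing facts $m_{6i+1,j}=0$ for $1\le j<i$ used in the proof of Lemma~\ref{Hi1} correctly truncate the sum), and that the index shift by $i$ inside $m_{6i+1,\,i+j}$ is consistent with the shift $y_{\beta+1,j}=\sum_i y_{\beta,i}\,m_{6i+1,j+i}$ in the theorem statement. A subtle point is that the exponent on $f_1$ in~\eqref{G1} is $6i+1$ rather than the $6i$ of~\eqref{H1}; this extra factor of $f_1$ is the two-color feature, and I would need to check it is preserved under $H$ and is exactly what singles out the use of~\eqref{H6i+1} (the $6i+1$ case of Lemma~\ref{Hi1}) rather than~\eqref{H6i}. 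Once the base case and this single inductive passage are verified, the theorem follows immediately, and the parallel generating functions underlying~\eqref{c2}--\eqref{c6} would be handled by the same method using~\eqref{H6i+2} and~\eqref{H6i} in place of~\eqref{H6i+1}.
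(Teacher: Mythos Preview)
Your proposal is correct and follows essentially the same argument as the paper: the base case is obtained by writing $\sum p_{1,5^{2k-1}}(n)q^n=\dfrac{1}{f_{5^{2k-1}}}\sum p(n)q^n$, extracting the progression $5^{2k-1}n+\delta_{2k-1}$, and invoking \eqref{H1} (the extra $1/f_1$ is exactly the factor $1/f_{5^{2k-1}}$ after the substitution $q^{5^{2k-1}}\mapsto q$); the inductive step multiplies \eqref{G1} by $q^2$, applies $H$, pulls the $f_5$-factor through, and uses \eqref{H6i+1} to produce the recurrence $y^{(2k-1)}_{\beta+2,j}=\sum_i y^{(2k-1)}_{\beta+1,i}\,m_{6i+1,i+j}$ together with the constant $c_{\beta+1}=3\cdot 5^{2k+\beta-1}+c_\beta$. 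Your anticipated ``obstacles'' (index alignment, the $q$-shift, the extra $f_1$) are precisely the bookkeeping the paper carries out, and they resolve exactly as you describe.
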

\begin{proof}
From \eqref{bl}, we have
\begin{equation}\label{p1}
    \sum\limits_{n\geq0}p_{1,5^{2k-1}}(n)q^n=\dfrac{1}{f_{5^{2k-1}}}\sum\limits_{n\geq0}p(n)q^n.
\end{equation}
Extracting the terms involving $q^{5^{2k-1}n+\delta_{2k-1}}$ on both sides of \eqref{p1} and dividing throughout by $q^{\delta_{2k-1}}$, we obtain  
\begin{equation*}
    \sum\limits_{n\geq0}p_{1,5^{2k-1}}(5^{2k-1}n+\delta_{2k-1})q^{5^{2k-1}n}=\dfrac{1}{f_{5^{2k-1}}}\sum\limits_{n\geq0}p(5^{2k-1}n+\delta_{2k-1})q^{5^{2k-1}n}.
\end{equation*}
If we replace $q^{5^{2k-1}}$ by $q$ and  use \eqref{H1}, we get
\begin{equation*}
    \sum\limits_{n\geq0}p_{1,5^{2k-1}}(5^{2k-1}n+\delta_{2k-1})q^n=\sum_{j\geq 1}x_{2k-1,j} \, q^{j-1}\frac{f_5^{6j-1}}{f_1^{6j+1}},
\end{equation*}
which is the case $\beta=0$ of \eqref{G1}.

We now assume that \eqref{G1} is true for some integer $\beta\geq 0$. Applying the operator $H$ to both sides, by \eqref{H6i+1}, we have

\begin{equation*}
\begin{split}
    \sum_{n\geq 0}p_{1,5^{2k-1}}\left(5^{2k+\beta}n+\frac{18\cdot5^{2k+\beta}+5^{2k-1}+1}{24}\right)q^{5n+5}&=\sum_{i\geq 1}y^{(2k-1)}_{\beta+1,i} \, H\left(q^{i+1}\frac{f_5^{6i}}{f_1^{6i+1}}\right)\times \frac{1}{f_5}
    \\&=\sum_{i\geq 1}y^{(2k-1)}_{\beta+1,i}\sum_{j=1}^{5i+1} m_{6i+1,i+j}\, q^{5j} \frac{f_{25}^{6j-1}}{f_5^{6j+1}}\\&
    =\sum_{j\geq 1}\left(\sum_{i\geq1}y^{(2k-1)}_{\beta+1,i} \, m_{6i+1,i+j}\right) q^{5j} \frac{f_{25}^{6j-1}}{f_5^{6j+1}}\\&
    =\sum_{j\geq 1}y^{(2k-1)}_{\beta+2,j} \,q^{5j} \frac{f_{25}^{6j-1}}{f_5^{6j+1}}.
    \end{split}
\end{equation*}
That is,
\begin{equation*}
    \sum_{n\geq 0}p_{1,5^{2k-1}}\left(5^{2k+\beta}n+\frac{18\cdot5^{2k+\beta}+5^{2k-1}+1}{24}\right)q^{n}=\sum_{i\geq 1}y^{(2k-1)}_{\beta+2,i} \, q^{i-1} \frac{f_{5}^{6i-1}}{f_1^{6i+1}}.
\end{equation*}
So we obtain \eqref{G1} with $\beta$ replaced by $\beta+1$.

\end{proof}

\begin{theorem} \label{T2}
For each $\beta \geq0$, and $k\geq1$, we have
\begin{equation}\label{G3}
    \sum_{n\geq 0}p_{1,5^{2k}}\left(5^{2k+2\beta-1}n+\frac{14\cdot5^{2k+2\beta-1}+5^{2k}+1}{24}\right)q^n=\sum_{i\geq 1}y^{(2k)}_{2\beta+1,i}\,q^{i-1}\frac{f_5^{6i-2}}{f_1^{6i}},
\end{equation}
and
\begin{equation}\label{G4}
    \sum_{n\geq 0}p_{1,5^{2k}}\left(5^{2k+2\beta}n+\frac{22\cdot5^{2k+2\beta}+5^{2k}+1}{24}\right)q^{n}=\sum_{j\geq 1}y^{(2k)}_{2\beta+2,j}q^{j-1} \frac{f_{5}^{6j}}{f_1^{6j+2}},
\end{equation}
where coefficient vectors are defined as follow:
\begin{equation*}
        y_{1,j}^{(2k)}=x_{2k-1,j}
\end{equation*}
and
\begin{equation*}
	y^{(2k)}_{\beta+1,j}= 
	\begin{cases}
		\displaystyle \sum_{i\geq1}y^{(2k)}_{\beta,i} m_{6i+2,j+i} \,\,\   & \text{if  $\beta$ is even},\\
		\displaystyle \sum_{i\geq1}y^{(2k)}_{\beta,i}m_{6i,j+i} \,\,\   & \text{if  $\beta$ is odd},
	\end{cases}
\end{equation*}
for all $\beta, j\geq 1$.
\end{theorem}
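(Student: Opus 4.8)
The plan is to run the same inductive engine as in Theorem~\ref{T1}, with the new wrinkle that the two families \eqref{G3} and \eqref{G4} must be proved \emph{simultaneously} by an alternating induction: one huffing step carries \eqref{G3} into \eqref{G4} and the next carries \eqref{G4} back into \eqref{G3}. These two steps are driven by the two huffing identities \eqref{H6i} and \eqref{H6i+2} of Lemma~\ref{Hi1}, which match exactly the two parity branches in the definition of $y^{(2k)}_{\beta+1,j}$. A single seed, the base case $\beta=0$ of \eqref{G3}, then propagates to all $\beta\geq0$; in particular \eqref{G4} at $\beta=0$ is produced by the first step rather than proved separately.

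For the seed I would start from \eqref{bl} written as $\sum_{n\geq0}p_{1,5^{2k}}(n)q^n=\frac{1}{f_{5^{2k}}}\sum_{n\geq0}p(n)q^n$ and extract the progression $5^{2k-1}n+\delta_{2k-1}$. The key observation is that $\frac{1}{f_{5^{2k}}}$ involves only exponents divisible by $5^{2k}$, hence by $5^{2k-1}$, so it is a spectator in this dissection and passes outside untouched; replacing $q^{5^{2k-1}}$ by $q$ then turns $f_{5^{2k}}$ into $f_5$. Feeding in the Hirschhorn--Hunt expansion \eqref{H1} and absorbing the surviving $\tfrac{1}{f_5}$ gives
\[
\sum_{n\geq0}p_{1,5^{2k}}\!\left(5^{2k-1}n+\delta_{2k-1}\right)q^n=\frac{1}{f_5}\sum_{j\geq1}x_{2k-1,j}\,q^{j-1}\frac{f_5^{6j-1}}{f_1^{6j}}=\sum_{j\geq1}x_{2k-1,j}\,q^{j-1}\frac{f_5^{6j-2}}{f_1^{6j}}.
\]
Since $24\delta_{2k-1}=19\cdot5^{2k-1}+1$ one checks $\delta_{2k-1}=\frac{14\cdot5^{2k-1}+5^{2k}+1}{24}$, so with $y^{(2k)}_{1,j}=x_{2k-1,j}$ this is precisely \eqref{G3} at $\beta=0$. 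I would stress here the one conceptual point: although the object is $p_{1,5^{2k}}$, the recursion is seeded by the \emph{odd}-index vector $x_{2k-1}$, because the first dissection is performed at level $5^{2k-1}$ while $f_{5^{2k}}$ sits idle.

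For the alternation, assume \eqref{G3} at a given $\beta$. The constants of \eqref{G3} and \eqref{G4} differ by $\tfrac{22\cdot5^{2k+2\beta}-14\cdot5^{2k+2\beta-1}}{24}=4\cdot5^{2k+2\beta-1}$ (the $5^{2k}+1$ terms cancel), so the required subprogression is the residue $m\equiv4\pmod5$. I align it to $0$ by multiplying the generating function by $q\,f_5$, which turns the typical term $q^{i-1}\tfrac{f_5^{6i-2}}{f_1^{6i}}$ into $q^{i}\tfrac{f_5^{6i-1}}{f_1^{6i}}$, exactly the input of \eqref{H6i}; the inserted $f_5$ (compensated afterwards by $\tfrac{1}{f_5}$) repairs the mismatch between the $f_5$-power $6i-2$ in \eqref{G3} and the power $6i-1$ demanded by \eqref{H6i}. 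Collecting coefficients gives the odd-branch recursion $y^{(2k)}_{2\beta+2,j}=\sum_i y^{(2k)}_{2\beta+1,i}m_{6i,j+i}$ and, after $q^5\mapsto q$ and division by $q$, the identity \eqref{G4}. Conversely, assuming \eqref{G4}, the constants of \eqref{G4} and of \eqref{G3} at $\beta+1$ differ by $\tfrac{14\cdot5^{2k+2\beta+1}-22\cdot5^{2k+2\beta}}{24}=2\cdot5^{2k+2\beta}$, selecting $m\equiv2\pmod5$; multiplying by $q^3$ turns $q^{i-1}\tfrac{f_5^{6i}}{f_1^{6i+2}}$ into $q^{i+2}\tfrac{f_5^{6i}}{f_1^{6i+2}}$, which is already the exact input of \eqref{H6i+2}, so here \emph{no} auxiliary $f_5$ is needed. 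This yields the even-branch recursion with $m_{6i+2,j+i}$ and, again after $q^5\mapsto q$ and division by $q$, the identity \eqref{G3} at $\beta+1$. In both steps the chosen shift leaves the left-hand exponent equal to $5n+5$, which is what makes the final $q^5\mapsto q$ collapse everything to $q^n$.

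I expect the main obstacle to be the bookkeeping rather than any isolated hard step. One has to verify that the affine constants telescope correctly along the chain running from \eqref{G3} at level $\beta$ through \eqref{G4} at level $\beta$ to \eqref{G3} at level $\beta+1$ --- it is precisely the cancellation of the $5^{2k}+1$ terms that forces the residues to be the clean values $4$ and $2$ --- while keeping each parity branch of the $y$-recursion paired with the correct huffing identity and tracking the residue shifts $q\,f_5$ and $q^3$ so that the left exponent is always $5n+5$. Everything else is a direct transcription of the mechanism already used in Theorem~\ref{T1}.
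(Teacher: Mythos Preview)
Your proposal is correct and follows essentially the same route as the paper: seed \eqref{G3} at $\beta=0$ from \eqref{H1} via the spectator factor $1/f_{5^{2k}}$, then alternate between \eqref{G3} and \eqref{G4} by applying $H$ with the appropriate $q$-shift and $f_5$-adjustment, invoking \eqref{H6i} for the step \eqref{G3}$\to$\eqref{G4} and \eqref{H6i+2} for \eqref{G4}$\to$\eqref{G3}. Your pairing of the huffing identities with the parity branches of the $y$-recursion is exactly right (the paper's text swaps the labels \eqref{H6i} and \eqref{H6i+2} in the two inductive steps, but its displayed computations agree with your assignment).
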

\begin{proof}
    Equation \eqref{bl} can be written as
\begin{equation*}
    \sum\limits_{n\geq0}p_{1,5^{2k}}(n)q^n=\dfrac{1}{f_{5^{2k}}}\sum\limits_{n\geq0}p(n)q^n.
\end{equation*}
Extracting the terms involving $q^{5^{2k-1}n+\delta_{2k-1}}$ on both sides of the above equation and dividing throughout by $q^{\delta_{2k-1}}$, we obtain  
\begin{equation}\label{p6}
    \sum\limits_{n\geq0}p_{1,5^{2k}}(5^{2k-1}n+\delta_{2k-1})q^{5^{2k-1}n}=\dfrac{1}{f_{5^{2k}}}\sum\limits_{n\geq0}p(5^{2k-1}n+\delta_{2k-1})q^{5^{2k-1}n}.
\end{equation}
If we replace $q^{5^{2k-1}}$ by $q$ and use \eqref{H2}, we get
\begin{equation}\label{p7}
    \sum\limits_{n\geq0}p_{1,5^{2k}}(5^{2k-1}n+\delta_{2k-1})q^n=\sum_{j\geq 1}x_{2k-1,j} \, q^{j-1}\frac{f_5^{6j-2}}{f_1^{6j}},
\end{equation}
which is the case $\beta=0$ of \eqref{G3}.

We now assume that \eqref{G3} is true for some integer $\beta\geq0$. Applying the operator $H$ to both sides, by \eqref{H6i+2}, we have
\begin{equation*}
    \begin{split}
      \sum_{n\geq 0}p_{1,5^{2k}}\left(5^{2k+2\beta}n+\frac{22\cdot5^{2k+2\beta}+5^{2k}+1}{24}\right)q^{5n+5}&=\sum_{i\geq 1}y^{(2k)}_{2\beta+1,i} \, H\left(q^{i}\frac{f_5^{6i-1}}{f_1^{6i}}\right)\times\frac{1}{f_5}\\&=\sum_{i\geq 1}y^{(2k)}_{2\beta+1,i}\sum_{j=1}^{5i} m_{6i, i+j} \,q^{5j} \frac{f_{25}^{6j}}{f_5^{6j+2}}\\&
        =\sum_{j\geq 1}\left(\sum_{i\geq1}y^{(2k)}_{2\beta+1,i} \, m_{6i, i+j} \right)q^{5j} \frac{f_{25}^{6j}}{f_5^{6j+2}}\\&
        =\sum_{j\geq 1}y^{(2k)}_{2\beta+2,j}\, q^{5j} \frac{f_{25}^{6j}}{f_5^{6j+2}},
    \end{split}
\end{equation*}
That is
\begin{equation*}
    \sum_{n\geq 0}p_{1,5^{2k}}\left(5^{2k+2\beta}n+\frac{22\cdot5^{2k+2\beta}+5^{2k}+1}{24}\right)q^{n}=\sum_{j\geq 1}y^{(2k)}_{2\beta+2,j}q^{j-1} \frac{f_{5}^{6j}}{f_1^{6j+2}}.
\end{equation*}
Hence, if \eqref{G3} is true for some integer $\beta \geq 0$, then \eqref{G4} is true for $\beta$.

Suppose that \eqref{G4} is true for some integer $\beta \geq 0$. Applying the operating $H$ to both sides, by \eqref{H6i}, we obtain
\begin{equation*}
\begin{split}
       \sum_{n\geq 0}p_{1,5^{2k}}\left(5^{2k+2\beta+1}n+\frac{14\cdot5^{2k+2\beta+1}+5^{2k}+1}{24}\right)q^{5n+5}&=\sum_{i\geq 1}y^{(2k)}_{2\beta+2,i} \sum_{j=1}^{5i} m_{6i+2,i+j} q^{5j} \, \frac{f_{25}^{6j-2}}{f_5^{6j}}\\&
       =\sum_{j\geq 1}\left(\sum_{i\geq1}y^{(2k)}_{2\beta+2,i} \, m_{6i+2,i+j}\right) q^{5j} \frac{f_{25}^{6j-2}}{f_5^{6j}}\\&
       =\sum_{j\geq 1}y^{(2k)}_{2\beta+3,j} \, q^{5j} \frac{f_{25}^{6j-2}}{f_5^{6j}},
\end{split}
\end{equation*}
which yields
\begin{equation*}
    \sum_{n\geq 0}p_{1,5^{2k}}\left(5^{2k+2\beta+1}n+\frac{14\cdot5^{2k+2\beta+1}+5^{2k}+1}{24}\right)q^{n}=\sum_{j\geq 1}y^{(2k)}_{2\beta+3,j} \, q^{j-1} \frac{f_{5}^{6j-2}}{f_1^{6j}}.
\end{equation*}
This is \eqref{G3} with $\beta$ replaced by $\beta+1$. This completes the proof.
\end{proof}


\section{Proof of Congruences}

For a positive integer $n$, let $\pi(n)$ be the highest power of $5$ that divides $n$, and define $\pi(0)=+\infty$.
\begin{lemma}[\cite{HH}, Lemma 4.1]
For each $i,j\geq1 $, we have
\begin{equation}\label{pi3}
        \pi\left(m_{i,j}\right)\geq\frac{5j-i-1}{2}.
    \end{equation}
\end{lemma}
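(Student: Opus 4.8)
The plan is to establish \eqref{pi3} by strong induction on the row index $i$, with the five-term recurrence defining $M=(m_{i,j})$ for $i\ge 6$ as the engine of the induction. Since each $\pi(m_{i,j})$ is a non-negative integer (or $+\infty$ when $m_{i,j}=0$), the inequality is understood as the assertion that this integer is at least $\lfloor (5j-i-1)/2\rfloor$, which is the quantity actually reproduced by the recurrence and which I will carry through the induction.

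For the base of the induction I would treat the rows $1\le i\le 5$ directly: every entry is displayed explicitly in the first five rows, so one reads off $\pi(m_{i,j})$ for each nonzero entry and checks it against $\lfloor (5j-i-1)/2\rfloor$. For $j=1$ and $i\ge 6$ one has $m_{i,1}=0$, hence $\pi(m_{i,1})=+\infty$ and the bound is vacuous; this removes from consideration every entry not produced by the recurrence.

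For the inductive step, fix $i\ge 6$ and $j\ge 2$ and assume \eqref{pi3} for all entries of rows $i-1,\dots,i-5$. Writing the recurrence as $m_{i,j}=\sum_{t=1}^{5}c_t\,m_{i-t,j-1}$ with $(c_1,\dots,c_5)=(25,25,15,5,1)$, whose $5$-adic valuations are $(2,2,1,1,0)$, it suffices — since the valuation of a sum is at least the minimum of the valuations of its summands — to bound each product. The inductive hypothesis gives
$$\pi\big(c_t m_{i-t,j-1}\big)=\pi(c_t)+\pi(m_{i-t,j-1})\ge \pi(c_t)+\frac{5(j-1)-(i-t)-1}{2}=\pi(c_t)+\frac{5j-i-6+t}{2},$$
and a direct evaluation of the five cases shows this equals $\tfrac{5j-i-1}{2}$ for $t\in\{1,3,5\}$ and $\tfrac{5j-i}{2}$ for $t\in\{2,4\}$, hence is at least $\tfrac{5j-i-1}{2}$ in every case. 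Taking the minimum over $t$ yields $\pi(m_{i,j})\ge \tfrac{5j-i-1}{2}$, completing the induction.

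The single delicate point is the bookkeeping with the parity-dependent floor: because shifting $j\mapsto j-1$ lowers the target by $5/2$ while the one-step shift $i\mapsto i-t$ raises it by $t/2$, the deficit must be made up precisely by the valuation $\pi(c_t)$, and I would verify $\pi(c_t)+\lfloor(5j-i-6+t)/2\rfloor\ge\lfloor(5j-i-1)/2\rfloor$ case by case at the level of floors. This is exactly where the specific coefficients $25,25,15,5,1$ are essential — any reduction of these valuations would break the estimate — and it is the only step requiring care; everything else is routine, and no information about the $m_{i,j}$ beyond their $5$-adic valuations is used.
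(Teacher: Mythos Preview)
Your argument is correct: the five base rows check directly (with the floor interpretation you state), the $j=1$ column is handled by $m_{i,1}=0$ for $i\ge 6$, and in the inductive step the valuations $(2,2,1,1,0)$ of the recurrence coefficients exactly compensate the drop in the target bound, term by term, as you verify. Note, however, that the present paper does not supply its own proof of this lemma --- it is quoted verbatim from Hirschhorn--Hunt \cite{HH}, Lemma~4.1 --- and your induction on $i$ via the five-term recurrence is precisely the argument given there, so there is nothing to contrast.
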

\begin{lemma}[\cite{HH}, Lemma 4.3]
    For each $k,j\geq1 $, we have
    \begin{equation}\label{pi1}
   \pi\left(x_{2k-1,j}\right)\geq 2k-1+\left[\frac{5j-5}{2}\right]
     \end{equation} 
    \end{lemma}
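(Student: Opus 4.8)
The plan is to prove \eqref{pi1} by induction on $k$. Because the recursion for the vectors $\mathbf{x}_k$ alternates between the column families $m_{6j,\,\cdot}$ (applied when the index is odd) and $m_{6j+1,\,\cdot}$ (applied when the index is even), a bound on the odd-indexed vectors alone will not reproduce itself after a single step; I would therefore carry along a companion estimate for the even-indexed vectors, namely
\begin{equation}\label{companion}
\pi\!\left(x_{2k,j}\right)\geq 2k-1+\frac{5j-2}{2}\qquad(k,j\geq1),
\end{equation}
and establish the alternating chain $(*)_1\Rightarrow(\star)_1\Rightarrow(*)_2\Rightarrow(\star)_2\Rightarrow\cdots$, where $(*)_k$ denotes \eqref{pi1} and $(\star)_k$ denotes \eqref{companion}. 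The base case $(*)_1$ is immediate from $\mathbf{x}_1=(5,0,0,\dots)$: indeed $\pi(x_{1,1})=\pi(5)=1=1+\lfloor 0\rfloor$, while $\pi(x_{1,j})=\pi(0)=+\infty$ for $j\geq2$.

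Both implications are then driven by the non-Archimedean inequality $\pi\big(\sum_j a_j\big)\geq\min_j\pi(a_j)$ together with \eqref{pi3}. For $(*)_k\Rightarrow(\star)_k$ I would apply $x_{2k,i}=\sum_{j}x_{2k-1,j}\,m_{6j,\,j+i}$, insert the inductive bound for $\pi(x_{2k-1,j})$ and the estimate $\pi(m_{6j,\,j+i})\geq(5i-j-1)/2$ from \eqref{pi3}, and minimize the resulting expression over $j\geq1$. A short parity computation shows the summand equals $2j+(5i-6)/2$ for $j$ odd and $2j+(5i-7)/2$ for $j$ even, hence is increasing in both parity classes, so the minimum occurs at $j=1$ and yields exactly \eqref{companion}; terms with $x_{2k-1,j}=0$ only improve the bound. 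For $(\star)_k\Rightarrow(*)_{k+1}$ I would argue identically with $x_{2k+1,i}=\sum_j x_{2k,j}\,m_{6j+1,\,j+i}$, using \eqref{companion} and $\pi(m_{6j+1,\,j+i})\geq(5i-j-2)/2$; here the summand is $2k-1+2j+(5i-4)/2$, again minimized at $j=1$, giving $\pi(x_{2k+1,i})\geq 2k+1+(5i-4)/2$. Since $(5i-4)/2\geq\lfloor(5i-5)/2\rfloor$ for every $i$, this real-valued bound already dominates the integer $2(k+1)-1+\lfloor(5i-5)/2\rfloor$, which closes the induction.

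The routine part is the arithmetic of the floor functions and the verification that each minimization lands at $j=1$. The genuine obstacle lies upstream: one must identify the correct companion estimate \eqref{companion}, since a single bound of the form \eqref{pi1} is not closed under the two-step recursion, and \eqref{companion} is precisely the strengthening that makes both implications go through with a net gain of $2$ per full cycle, matching $2(k+1)-1-(2k-1)=2$. Pinning down the constant and the half-integer shift in \eqref{companion}, so that after the second step the real-valued bound still dominates the target floor, is where the care is needed; everything else is bookkeeping secured by \eqref{pi3} and the ultrametric inequality.
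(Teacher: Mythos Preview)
Your overall strategy---an alternating induction carrying a companion bound on the even-indexed vectors---is exactly the method of Hirschhorn and Hunt; the present paper gives no proof of its own and simply cites their Lemma~4.3. However, the specific companion estimate you propose,
\[
\pi\!\left(x_{2k,j}\right)\ \ge\ 2k-1+\frac{5j-2}{2},
\]
is false. The bound \eqref{pi3} must be read with an implicit floor, i.e.\ $\pi(m_{i,j})\ge\big\lfloor(5j-i-1)/2\big\rfloor$, not as a literal real-valued inequality; notice that the paper itself inserts brackets $[\,\cdot\,]$ whenever it invokes \eqref{pi3} in the proofs of the two subsequent lemmas. Without the floor, \eqref{pi3} already fails for $m_{1,1}=5$, since $\pi(5)=1<3/2$. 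Carrying the unfloored bound through your first implication leads to $\pi(x_{2k,1})\ge 2k-1+\tfrac32$, hence (as $\pi$ is integer-valued) $\pi(x_{2k,1})\ge 2k+1$. But directly,
\[
x_{2,1}=x_{1,1}\,m_{6,2}=5\cdot\bigl(25\cdot1+25\cdot4+15\cdot9+5\cdot10+5\bigr)=5\cdot315=5^{2}\cdot63,
\]
so $\pi(x_{2,1})=2$, contradicting your companion bound already at $k=1$.

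The repair is routine once the floor is restored: the correct companion (this is what Hirschhorn--Hunt actually prove) is
\[
\pi\!\left(x_{2k,j}\right)\ \ge\ 2k+\left\lfloor\frac{5j-5}{2}\right\rfloor.
\]
With floors carried throughout, the minimization in $(*)_k\Rightarrow(\star)_k$ still lands at $j=1$ and gives $2k-1+\lfloor(5i-2)/2\rfloor$, which dominates $2k+\lfloor(5i-5)/2\rfloor$ because $\lfloor(5i-2)/2\rfloor-\lfloor(5i-5)/2\rfloor\in\{1,2\}$; similarly $(\star)_k\Rightarrow(*)_{k+1}$ yields $2k+\lfloor(5i-3)/2\rfloor\ge(2k+1)+\lfloor(5i-5)/2\rfloor$. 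So the architecture of your argument is sound, but the half-integer shift in your companion is an artifact of dropping the floor in \eqref{pi3}, and the estimate you wrote down does not actually hold.
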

\begin{lemma}
    For each $j,k\geq1$ and $\beta\geq 0$, we have
\begin{equation}\label{pi4}
   \pi\left(y^{(2k-1)}_{\beta+1,j}\right)\geq 2k+\beta-1+\left[\frac{5j-5}{2}\right].
\end{equation}
\end{lemma}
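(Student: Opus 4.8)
The plan is to establish \eqref{pi4} by induction on $\beta$, feeding the recurrence for the vectors $y^{(2k-1)}_{\beta+1,j}$ into the two valuation estimates \eqref{pi3} and \eqref{pi1}. The base case $\beta=0$ is immediate: since $y^{(2k-1)}_{1,j}=x_{2k-1,j}$ by definition, \eqref{pi1} gives $\pi\bigl(y^{(2k-1)}_{1,j}\bigr)\geq 2k-1+\left[\frac{5j-5}{2}\right]$, which is exactly \eqref{pi4} at $\beta=0$.

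For the inductive step, I would assume \eqref{pi4} holds at $\beta$, that is, $\pi\bigl(y^{(2k-1)}_{\beta+1,i}\bigr)\geq 2k+\beta-1+\left[\frac{5i-5}{2}\right]$ for all $i\geq1$, and aim to deduce $\pi\bigl(y^{(2k-1)}_{\beta+2,j}\bigr)\geq 2k+\beta+\left[\frac{5j-5}{2}\right]$. Since $\pi$ is non-archimedean, so that $\pi\bigl(\sum_i a_i\bigr)\geq\min_i\pi(a_i)$ and $\pi(ab)=\pi(a)+\pi(b)$, the recurrence $y^{(2k-1)}_{\beta+2,j}=\sum_{i\geq1}y^{(2k-1)}_{\beta+1,i}\,m_{6i+1,j+i}$ reduces the whole matter to a term-by-term estimate. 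Applying \eqref{pi3} to $m_{6i+1,\,j+i}$ (first index $6i+1$, second index $j+i$) gives $\pi\bigl(m_{6i+1,j+i}\bigr)\geq\frac{5j-i-2}{2}$, so the $i$-th summand has valuation at least $2k+\beta-1+\left[\frac{5i-5}{2}\right]+\frac{5j-i-2}{2}$ (terms with $m_{6i+1,j+i}=0$ are harmless, having valuation $+\infty$). It therefore suffices to verify, for every $i\geq1$, the inequality
\[
\left[\frac{5i-5}{2}\right]+\frac{5j-i-2}{2}\ \geq\ 1+\left[\frac{5j-5}{2}\right].
\]

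The technical heart—and the only place requiring genuine care—is this floor inequality. I would write $\left[\frac{5i-5}{2}\right]=\frac{5i-5}{2}-\epsilon_i$ and $\left[\frac{5j-5}{2}\right]=\frac{5j-5}{2}-\epsilon_j$, where $\epsilon_i,\epsilon_j\in\{0,\tfrac12\}$ equal $0$ when $i$ (resp. $j$) is odd and $\tfrac12$ when even. The half-integer and the $\frac{5j}{2}$ contributions cancel, and the inequality collapses to $2i-1-\epsilon_i+\epsilon_j\geq1$, i.e. $2(i-1)+\epsilon_j\geq\epsilon_i$. For $i=1$ one has $\epsilon_1=0$, so the right-hand side is $0$ and the inequality is clear; for $i\geq2$ the term $2(i-1)\geq2$ already dominates $\epsilon_i\leq\tfrac12$. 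Hence every summand meets the target integer $2k+\beta+\left[\frac{5j-5}{2}\right]$, the minimum over $i$ does too, and—since $\pi$ takes integer values—the induction closes, proving \eqref{pi4}.
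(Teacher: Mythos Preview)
Your proof is correct and follows essentially the same approach as the paper: induction on $\beta$ with base case supplied by \eqref{pi1}, and inductive step obtained by combining the recurrence with the non-archimedean property of $\pi$ and the estimate \eqref{pi3}. The only cosmetic difference is that the paper writes the intermediate bound as $\left[\frac{5j-i-2}{2}\right]$ (taking the floor immediately) and then asserts the final inequality without details, whereas you keep the half-integer $\frac{5j-i-2}{2}$ and carefully verify the floor inequality via the $\epsilon_i,\epsilon_j$ bookkeeping; both routes yield the same conclusion.
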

\begin{proof}
In view of Theorem \ref{T1}, we have
\begin{equation*}
        y_{1,j}^{(2k-1)}=x_{2k-1,j}.
\end{equation*}
From \eqref{pi1}, we can see that the inequality \eqref{pi4} holds for $\beta=0$.

We now assume that \eqref{pi4} is true for some $\beta\geq 0$, then
\begin{align*}
   \pi\left(y^{(2k-1)}_{\beta+2,j}\right)&\geq \min_{i\geq1}\left\{\pi\left(y^{(2k-1)}_{\beta+1,i}\right)+\pi\left(m_{6i+1,i+j}\right)\right\}\\&
   \geq \min_{i\geq1}\left\{2k+\beta-1+\left[\frac{5i-5}{2}\right]+\left[\frac{5j-i-2}{2}\right]\right\}\\&
   \geq 2k+\beta+\left[\frac{5j-5}{2}\right],
   \end{align*}

which is \eqref{pi4} with $\beta+1$ for $\beta$. This completes the proof.
\end{proof}
\begin{lemma}
    For each $j,k\geq1$ and $\beta\geq 0$, we have
\begin{align}\label{pi6}
   \pi\left(y^{(2k)}_{2\beta+1,j}\right)\geq 2k+\beta-1+\left[\frac{5j-5}{2}\right]
   \end{align}
   and
\begin{align}\label{pi61}
\pi\left(y^{(2k)}_{2\beta+2,j}\right)\geq 2k+\beta+\left[\frac{5j-5}{2}\right].
\end{align}
\end{lemma}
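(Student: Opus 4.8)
The plan is to establish \eqref{pi6} and \eqref{pi61} simultaneously by a single induction on $\beta$, exploiting the fact that the recurrence for $y^{(2k)}_{\beta+1,j}$ in Theorem \ref{T2} alternates between the two multipliers $m_{6i,\,j+i}$ and $m_{6i+2,\,j+i}$ according to the parity of $\beta$. Since $y^{(2k)}_{1,j}=x_{2k-1,j}$, the case $\beta=0$ of \eqref{pi6} is precisely the bound \eqref{pi1}, which serves as the base of the induction. The two inequalities then feed into one another: I would show that \eqref{pi6} at a given $\beta$ forces \eqref{pi61} at the same $\beta$, and that \eqref{pi61} at $\beta$ forces \eqref{pi6} at $\beta+1$. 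Chaining these two implications from the base case yields all of \eqref{pi6} and \eqref{pi61}.

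For the first implication I use that $2\beta+1$ is odd, so $y^{(2k)}_{2\beta+2,j}=\sum_{i\ge1}y^{(2k)}_{2\beta+1,i}\,m_{6i,\,j+i}$. Bounding the valuation of the sum by the minimum of the valuations, applying \eqref{pi3} in the form $\pi\!\left(m_{6i,\,j+i}\right)\ge\frac{5j-i-1}{2}$, and using the inductive hypothesis \eqref{pi6}, I obtain
\[
\pi\!\left(y^{(2k)}_{2\beta+2,j}\right)\ge\min_{i\ge1}\left\{2k+\beta-1+\left[\frac{5i-5}{2}\right]+\frac{5j-i-1}{2}\right\}.
\]
For the second implication I use that $2\beta+2$ is even, so $y^{(2k)}_{2\beta+3,j}=\sum_{i\ge1}y^{(2k)}_{2\beta+2,i}\,m_{6i+2,\,j+i}$, and now \eqref{pi3} gives $\pi\!\left(m_{6i+2,\,j+i}\right)\ge\frac{5j-i-3}{2}$; combined with \eqref{pi61} this produces
\[
\pi\!\left(y^{(2k)}_{2\beta+3,j}\right)\ge\min_{i\ge1}\left\{2k+\beta+\left[\frac{5i-5}{2}\right]+\frac{5j-i-3}{2}\right\}.
\]

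The crux of the argument, and the step I expect to require the most care, is the elementary estimation of these two minima. In both cases the floor term $\left[\frac{5i-5}{2}\right]$ increases by at least $2$ each time $i$ increases by one, while the contribution $\frac{5j-i-c}{2}$ coming from \eqref{pi3} decreases by only $\frac12$; hence the bracketed expression is strictly increasing in $i$, and the minimum is attained at $i=1$, where $\left[\frac{5i-5}{2}\right]=0$. Setting $i=1$ leaves $\frac{5j-2}{2}$ in the first case and $\frac{5j-4}{2}$ in the second, and since $\pi$ is integer-valued a short split on the parity of $j$ shows $\frac{5j-2}{2}\ge 1+\left[\frac{5j-5}{2}\right]$ and $\frac{5j-4}{2}\ge\left[\frac{5j-5}{2}\right]$, which give \eqref{pi61} and \eqref{pi6} respectively. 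The only delicate point is precisely this interplay between the floor function and the half-integers arising from \eqref{pi3}: one must verify both the monotonicity in $i$ and the parity split in $j$, but each is routine once the minimization has been set up as above.
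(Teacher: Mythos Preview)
Your proposal is correct and follows essentially the same approach as the paper: a two-step induction in which \eqref{pi6} at $\beta$ yields \eqref{pi61} at $\beta$ via the recursion with $m_{6i,i+j}$, and \eqref{pi61} at $\beta$ yields \eqref{pi6} at $\beta+1$ via the recursion with $m_{6i+2,i+j}$, with the base case supplied by \eqref{pi1}. The only cosmetic difference is that the paper floors the bound from \eqref{pi3} immediately and writes the minimization as $\left[\frac{5i-5}{2}\right]+\left[\frac{5j-i-1}{2}\right]$ (respectively $\left[\frac{5j-i-3}{2}\right]$), then passes directly to $\left[\frac{5j-2}{2}\right]=1+\left[\frac{5j-4}{2}\right]$ (respectively $\left[\frac{5j-4}{2}\right]$), whereas you keep the half-integer and argue monotonicity in $i$ explicitly; both routes arrive at the same estimates.
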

\begin{proof}
In view of Theorem \ref{T2}, we have
\begin{equation*}
        y_{1,j}^{(2k)}=x_{2k-1,j}.
\end{equation*}
From \eqref{pi1}, we can see that \eqref{pi6} holds for $\beta=0$.
Now assume that \eqref{pi6} is true for some $\beta\geq 0$, then
\begin{align*}
   \pi\left(y^{(2k)}_{2\beta+2,j}\right)&\geq \min_{i\geq1}\left\{\pi\left(y^{(2k)}_{2\beta+1,i}\right)+\pi\left(m_{6i,i+j}\right)\right\}\\&
   \geq \min_{i\geq1}\left\{2k+\beta-1+\left[\frac{5i-4}{2}\right]+\left[\frac{5j-i-1}{2}\right]\right\}\\&
   \geq 2k+\beta-1+\left[\frac{5j-2}{2}\right]\\&
   = 2k+\beta+\left[\frac{5j-4}{2}\right]\\&
   \geq 2k+\beta+\left[\frac{5j-5}{2}\right],
   \end{align*}
which is \eqref{pi61}.

Now suppose \eqref{pi61} holds for some $\beta\geq0$. Then,
\begin{align*}
   \pi\left(y^{(2k)}_{2\beta+3,j}\right)&\geq \min_{i\geq1}\left\{\pi\left(y^{(2k)}_{2\beta+2,i}\right)+\pi\left(m_{6i+2,i+j}\right)\right\}\\&
   \geq \min_{i\geq1}\left\{2k+\beta+\left[\frac{5i-5}{2}\right]+\left[\frac{5j-i-3}{2}\right]\right\}\\&
   \geq 2k+\beta+\left[\frac{5j-4}{2}\right]\\&
   \geq2k+\beta+\left[\frac{5j-5}{2}\right],
\end{align*}
which is \eqref{pi6} with $\beta+1$ for $\beta$. This completes the proof.
\end{proof}
\noindent\textbf{Proof of Theorem \ref{th1}}
Congruence \eqref{c1} follows from \eqref{pi4} together with \eqref{G1}, \eqref{c2} follows from \eqref{pi6} together with \eqref{G3}, and \eqref{c3} follows from \eqref{pi61} and \eqref{G4}.

Let $p_{-2}(n)$ be defined by 
\begin{equation}
\sum\limits_{n=0}^{\infty}p_{-2}(n)q^n=\frac{1}{f_1^2}.
\end{equation}
It has been shown by Ramanathan \cite{KGR} that for $n\ge0$ and $s\in\{2, 3, 4\}$,
\begin{equation}\label{d1}
p_{-2}(5n+s)\equiv 0\pmod{5}.
\end{equation}

In view of \eqref{pi4}, we can express \eqref{G1} as
\begin{equation*}
\sum_{n\geq 0}p_{1,5^{2k-1}}\left(5^{2k+\beta-1}n+\frac{18\cdot5^{2k+\beta-1}+5^{2k-1}+1}{24}\right)q^n\equiv y^{(2k-1)}_{\beta+1,1}\frac{f_5^{5}}{f_1^{7}}\pmod{5^{2k+\beta+1}}.
\end{equation*}
Again from \eqref{pi4} and the binomial theorem, we have
\begin{align*}
\sum_{n\geq 0}p_{1,5^{2k-1}}\left(5^{2k+\beta-1}n+\frac{18\cdot5^{2k+\beta-1}+5^{2k-1}+1}{24}\right)q^n\equiv y^{(2k-1)}_{\beta+1,1}\frac{f_5^{4}}{f_1^{2}}\pmod{5^{2k+\beta}},
\end{align*}
which implies that
\begin{equation}\label{c15}
\sum_{n\geq 0}p_{1,5^{2k-1}}\left(5^{2k+\beta-1}n+\frac{18\cdot5^{2k+\beta-1}+5^{2k-1}+1}{24}\right)q^n\equiv y^{(2k-1)}_{\beta+1,1} f_5^{4}\sum\limits_{n=0}^{\infty}p_{-2}(n)q^n\pmod{5^{2k+\beta}}.
\end{equation}
Congrunence \eqref{c4}, follows from \eqref{c15} and \eqref{d1}.


By \eqref{pi61}, we can express \eqref{G4} as
\begin{equation*}
\sum_{n\geq 0}p_{1,5^{2k}}\left(5^{2k+2\beta}n+\frac{22\cdot5^{2k+2\beta}+5^{2k}+1}{24}\right)q^{n}\equiv y^{(2k)}_{2\beta+2,1} \frac{f_{5}^{6}}{f_1^{8}}\pmod{5^{2k+\beta}}.
\end{equation*}
Using \eqref{pi61} and the binomial theorem, we have
\begin{equation}\label{c18}
\sum_{n\geq 0}p_{1,5^{2k}}\left(5^{2k+2\beta}n+\frac{22\cdot5^{2k+2\beta}+5^{2k}+1}{24}\right)q^{n}\equiv y^{(2k)}_{2\beta+2,1} f_5^4f_1^2\pmod{5^{2k+\beta+1}}.
\end{equation}
From \cite{Ram2}, we have
\begin{equation}\label{c19}
f_1=f_{25}(R(q^5)-q-q^2R^{-1}(q^5)), \,\,\,\, \text{where} \,\,\,\, R(q)=\dfrac{f(-q^2,-q^3)}{f(-q,-q^4)}.
\end{equation}
Invoking \eqref{c19} in \eqref{c18}, we obatain
\begin{equation}\label{c20}
\begin{split}
&\sum_{n\geq 0}p_{1,5^{2k}}\left(5^{2k+2\beta}n+\frac{22\cdot5^{2k+2\beta}+5^{2k}+1}{24}\right)q^{n}\\&\equiv y^{(2k)}_{2\beta+2,1} f_5^4f_{25}^2\left(R(q^5)-q-q^2R^{-1}(q^5)\right)^2\pmod{5^{2k+\beta+1}}.
\end{split}
\end{equation}
Equating the coefficients of $q^{5n+2}$ from both sides of the above equation, dividing throughout by $q^2$ and then replacing $q^5$ by $q$,
\begin{equation*}
\sum_{n\geq 0}p_{1,5^{2k}}\left(5^{2k+2\beta+1}n+\frac{70\cdot5^{2k+2\beta}+5^{2k}+1}{24}\right)q^{n}\equiv -2y^{(2k)}_{2\beta+2,1} f_1^4f_{5}^2\pmod{5^{2k+\beta+1}}.
\end{equation*}
But
\begin{equation*}
    f_5^2f_1^4 \equiv \dfrac{f_5^3}{f_1}\pmod{5}.
\end{equation*}
Thus,
\begin{equation}\label{c21}
\sum_{n\geq 0}p_{1,5^{2k}}\left(5^{2k+2\beta+1}n+\frac{70\cdot5^{2k+2\beta}+5^{2k}+1}{24}\right)q^{n-4}\equiv y^{(2k)}_{2\beta+2,1} f_{5}^3\sum\limits_{n=0}^{\infty}p(n)q^{n-4}\pmod{5^{2k+\beta+1}}.
\end{equation}
Operating $H$ on both sides and then using \eqref{pi61} and \eqref{pc1}, we arrive at \eqref{c6}.

\end{document}